\newtheorem{Pro}{Proposition}[section]
\newtheorem{Le}[Pro]{Lemma}
\newtheorem{Th}[Pro]{Theorem}
\newtheorem{Co}[Pro]{Corollary}
\theoremstyle{definition}
\theoremstyle{remark}
\let\al\alpha
\let\x\times
\let\d\partial
\let\then\Rightarrow
\let\xto\xrightarrow
\def\0{^{[1]}}
\def\ee{_{\mathrm{ee}}}
\def\1{^{-1}}
\def\cref#1#2#3{\left(#2\right.\left|\ #3\right)_{#1}}
\def\ee{{\mathscr E}}
\def\A{{\mathbb A}}
\def\ker{{\sf Ker}}
\def\cok{{\sf Coker}}
\DeclareMathOperator\Ext{{\mathsf{Ext}}}%
\def\hom{{\sf Hom}}%
\DeclareMathOperator\Ch{{\mathsf{Ch}}}%
\def\Hom{{\bf Hom}}
\numberwithin{equation}{subsection}
\begin{document}

\title{Abelian categories versus abelian $2$-categories}

\author[T. Pirashvili]{Teimuraz  Pirashvili}
\address{
Department of Mathematics\\
University of Leicester\\
University Road\\
Leicester\\
LE1 7RH, UK} \email{tp59-at-le.ac.uk}

\dedicatory{ Dedicated to professor Nodar Berikashvili on the
occasion of his  80th birthday}

\maketitle

\section{Introduction}
Two years ago we  gave a talk on abelian 2-categories in the
Max-Planck Institute of Mathematics  in Bonn. Our approach was
motivated by the theory of categorical modules over a categorical
rings \cite{catrings}. Details of the theory of categorical modules
will soon appears in our  joint work with Vincent Schmitt.

In the recent preprint Mathieu Dupont \cite{dupont} rediscovered
this notion. Our original axioms were equivalent but not the same as
one given in  \cite{dupont}. However  we did not used (co)pips and
(co)roots. Our  approach in the subject will appear elsewhere.

The paper \cite{dupont} contains several interesting results unknown
to us, however some of the results of Dupont were known to us
including Corollary 192 \cite{dupont}, which claims that the
category of discrete and codiscrete (or connected) objects are
equivalent abelian categories. Dupont posses also a question whether
any abelian category comes in this way. We will give a rather
trivial solution of this problem in the case when a given abelian
category has enough projective or injective objects, which we have
known for several years.

In this note we follow \cite{dupont} with few exceptions. We use the
term 2-kernel and 2-cokernel for what Dupont calls kernel and
cokernel and keep the terms kernel and cokernel  in the classical
meaning.

 Groupoids arising in this note are in fact Picard categories. In
 particular $\pi_1$ did not depends on the base point and
 therefore we omitted it.

\section{The 2-category of arrows $\A\0$}\label{A02}
Let $\A$ be an abelian category and let $\A\0$ be the $2$-category
of arrows of $\A$. Recall that objects of $\A\0$ are arrows
$a:A_1\to A_0$ of $\A$, a morphism  from $a:A_1\to A_0$ to $b:B_1\to
B_0$ is a pair $(f_0,f_1)$ of morphisms in $\A$ such that the
diagram
$$
\xymatrix{A_1\ar[r]^{f_1}\ar[d]^{a}& B_1\ar[d]^{b}\\
 A_0\ar[r]^{f_0}& B_0.}
$$
commutes, while a $2$-arrow $(f_0,f_1)\then (g_0,g_1)$ is an arrow
$\al:A_0\to B_1$ in $\A$ such that
$$f_1-g_1=\al a$$
$$f_0-g_0=b\al$$
with obvious compositions. In this case we also say that $(f_0,f_1)$
is homotopic to $(g_0,g_1)$ and write $(f_0,f_1)\sim (g_0,g_1)$.

For objects $a:A_1\to A_0$ and $b:B_1\to B_0$ we let
$\Hom_{\A\0}(a,b)$ be the corresponding hom-groupoid. It is clear
that
$$\pi_1(\Hom_{\A\0}(a,b))=\hom_\A(\cok(a),\ker(b))$$
However, in general we do not have a nice description of
$\pi_0(\Hom(a,b))$ in terms of $\ker(a),\cdots ,\cok(b)$. The
situation can be improved in some particular cases. To state the
corresponding result we need an additional category $\ee$. Objects
of $\ee$ are triples $(M,N,x)$, where $M$ and $N$ are objects of the
category $\A$, while $x\in \Ext^2_\A(M,N)$. A morphism from
$(M,N,x)$ to $(M',N',x')$ is a pair $(f,g)$, where $f:M\to M'$ and
$g:N\to N'$ are morphisms in $\A$ such that the equality
$f^*(x')=g_*(x)$ holds in $\Ext^2(M,N')$. For an object $a:A_1\to
A_0$ we let $Ch(a)$ be the triple $(\cok(a),\ker(a),ch(a)$, where
$ch(a)$ is the class of the 2-fold extension
$$0\to \ker(a)\to A_1\xto{a} A_0\to \cok(a)\to 0$$
in $\Ext^2_\A(\cok(a),\ker(a))$. In this way one gets a functor
$Ch:\A\0\to \ee$. We recall the following well-known result.
\begin{Le} \label{cnobili} Let $a:A_1\to A_0$ and $b:B_1\to
B_0$  be two objects of  $\A\0$. If $A_0$ is a projective object in
$\A$, then one has an exact sequence
$$0\to\Ext^1_\A(\cok(a),\ker(b))\to \pi_0(\Hom_{\A\0}(a,b))\to
\Hom_\ee(Ch(a),Ch(b))\to 0$$
\end{Le}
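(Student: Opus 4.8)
The plan is to recognise the second arrow in the sequence as $\pi_0$ applied to the functor $Ch\colon\A\0\to\ee$, and then to establish surjectivity on the right and compute the kernel on the left. A morphism $a\to b$ in $\A\0$ is a commuting square, that is, a chain map between the two-term complexes $[A_1\xto{a}A_0]$ and $[B_1\xto{b}B_0]$, and a $2$-arrow is a chain homotopy; on morphisms $Ch$ takes $(f_0,f_1)$ to the pair $(\bar f_0,f_1|_{\ker(a)})$ of induced maps $\bar f_0\colon\cok(a)\to\cok(b)$ and $f_1|_{\ker(a)}\colon\ker(a)\to\ker(b)$. Homotopic chain maps induce the same pair, so $Ch$ descends to a homomorphism $\Phi\colon\pi_0(\Hom_{\A\0}(a,b))\to\Hom_\ee(Ch(a),Ch(b))$; the fact that this pair actually lies in $\Hom_\ee$, i.e.\ is compatible with the $ch$-classes, is the standard observation that a morphism of two-fold extensions identifies the pushout and the pullback of their Yoneda classes, and uses nothing about $A_0$. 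It then remains to prove (a) $\Phi$ is surjective, and (b) a suitable homomorphism $\Ext^1_\A(\cok(a),\ker(b))\to\pi_0(\Hom_{\A\0}(a,b))$ is injective with image exactly $\ker\Phi$.

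The point where projectivity of $A_0$ enters is prepared as follows. Factor $a$ as $A_1\xrightarrow{p}\im(a)\xrightarrow{j}A_0$ and $b$ as $B_1\xrightarrow{q}\im(b)\hookrightarrow B_0$; this yields short exact sequences $\xi_a\colon 0\to\im(a)\to A_0\to\cok(a)\to 0$ and $\eta_a\colon 0\to\ker(a)\to A_1\xrightarrow{p}\im(a)\to 0$, and likewise $\xi_b,\eta_b$, and by the very definition of $ch$ one has $ch(a)=[\eta_a]\circ[\xi_a]$ in $\Ext^2_\A(\cok(a),\ker(a))$ (a Yoneda splice), and similarly $ch(b)=[\eta_b]\circ[\xi_b]$. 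Since $A_0$ is projective, $\xi_a$ is a projective presentation of $\cok(a)$, so the long exact $\Ext$-sequence together with $\Ext^{\ge 1}_\A(A_0,-)=0$ shows that $-\circ[\xi_a]\colon\Ext^1_\A(\im(a),N)\to\Ext^2_\A(\cok(a),N)$ is an isomorphism for every object $N$. Beyond this I will only use projectivity of $A_0$ to lift maps out of $A_0$ along epimorphisms; the rest is diagram chasing.

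For surjectivity, given $(f,g)\colon Ch(a)\to Ch(b)$ I first lift $A_0\onto\cok(a)\xrightarrow{f}\cok(b)$ through $B_0\onto\cok(b)$ to get $f_0\colon A_0\to B_0$ with $\bar f_0=f$; then $f_0$ carries $\im(a)$ into $\im(b)$, so it restricts to some $\bar\mu\colon\im(a)\to\im(b)$, gives a morphism of short exact sequences $\xi_a\to\xi_b$ over $f$, and hence $\bar\mu_*[\xi_a]=f^*[\xi_b]$, while $f_0a=(\im(b)\hookrightarrow B_0)\circ\bar\mu\circ p$. The obstruction to extending $(g,\bar\mu)$ to a chain map $f_1\colon A_1\to B_1$ from $\eta_a$ to $\eta_b$ is the class $g_*[\eta_a]-\bar\mu^*[\eta_b]\in\Ext^1_\A(\im(a),\ker(b))$; composing it on the right with $[\xi_a]$ and using $ch(a)=[\eta_a]\circ[\xi_a]$, $ch(b)=[\eta_b]\circ[\xi_b]$, the naturality identity $[\eta_b]\circ\bar\mu_*(-)=(\bar\mu^*[\eta_b])\circ(-)$, the equality $\bar\mu_*[\xi_a]=f^*[\xi_b]$, and the hypothesis $g_*(ch(a))=f^*(ch(b))$, one gets $(g_*[\eta_a]-\bar\mu^*[\eta_b])\circ[\xi_a]=0$ in $\Ext^2_\A(\cok(a),\ker(b))$; by injectivity of $-\circ[\xi_a]$ the obstruction vanishes, $f_1$ exists, and $(f_0,f_1)$ is a chain map with $\Phi([(f_0,f_1)])=(f,g)$.

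For the kernel, the presentation $\xi_a$ gives $\Ext^1_\A(\cok(a),\ker(b))\cong\hom_\A(\im(a),\ker(b))/j^*\hom_\A(A_0,\ker(b))$, so a class is represented by a morphism $\delta\colon\im(a)\to\ker(b)$, and I send $[\delta]$ to the class of the chain map $(0,(\ker(b)\hookrightarrow B_1)\circ\delta\circ p)\colon a\to b$; one checks routinely that this is a well-defined homomorphism with image in $\ker\Phi$. Conversely, if $(f_0,f_1)$ has $\bar f_0=0$ and $f_1|_{\ker(a)}=0$ then $f_0$ factors as $(\im(b)\hookrightarrow B_0)\circ\phi$ with $\phi\colon A_0\to\im(b)$, which lifts (using projectivity of $A_0$) through $q$ to some $\widetilde\phi\colon A_0\to B_1$, while $f_1$ factors as $\psi\circ p$ with $q\psi=\phi j$, so $\psi-\widetilde\phi j$ factors through $\ker(b)$ and defines $\delta\colon\im(a)\to\ker(b)$; a short check shows $[\delta]\in\Ext^1_\A(\cok(a),\ker(b))$ depends only on the homotopy class of $(f_0,f_1)$ and that the two assignments are mutually inverse, the homotopy $\widetilde\phi$ exhibiting $(f_0,f_1)-(0,(\ker(b)\hookrightarrow B_1)\circ\delta\circ p)$ as null-homotopic. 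Together with surjectivity this gives the short exact sequence. I expect the only genuinely delicate point to be the Yoneda-$\Ext$ bookkeeping in the surjectivity step — verifying, with the correct signs and against the precise definitions, the splicing identity $ch(a)=[\eta_a]\circ[\xi_a]$, the naturality of the splice, and the resulting formula for $g_*(ch(a))-f^*(ch(b))$ — the rest being formal diagram chasing.
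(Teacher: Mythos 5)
Your proof is correct, and since the paper states Lemma \ref{cnobili} without proof (it is invoked as a well-known result), there is nothing in the text to compare it against: your argument — identifying $\pi_0$ with chain-homotopy classes, using projectivity of $A_0$ so that splicing with $0\to \im(a)\to A_0\to\cok(a)\to 0$ is an isomorphism $\Ext^1_\A(\im(a),-)\to\Ext^2_\A(\cok(a),-)$, killing the obstruction $g_*[\eta_a]-\bar\mu^*[\eta_b]$ for surjectivity, and identifying the kernel with $\hom_\A(\im(a),\ker(b))/j^*\hom_\A(A_0,\ker(b))\cong\Ext^1_\A(\cok(a),\ker(b))$ — is precisely the standard proof the paper is appealing to. The steps you label as routine (well-definedness on homotopy classes, the Yoneda splicing identities up to sign, and the mutually inverse assignments on $\ker\Phi$) do check out.
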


\begin{Co} \label{cof}

\begin{enumerate}
\item[(i)]
 Let $a:A_1\to A_0$,  $b:B_1\to
B_0$  and $b':B'_1\to B_0'$ be objects of  $\A\0$ and let $b\to b'$
be a morphism in $\A\0$, such that the induced morphisms $\ker(b)\to
\ker(b')$ and $\cok(b)\to \cok(b')$ are isomorphisms. If $A_0$ is a
projective object in $\A$, then the induced morphism of groupoids
$$\Hom_{\A\0}(a,b)\to \Hom_{\A\0}(a,b')$$
is an equivalence of categories.

\item[(ii)] Let $a:A_1\to A_0$ and  $b:B_1\to
B_0$   be objects of  $\A\0$ such that $A_0$ and $B_0$ are
projective objects in $\A$.  If $a\to b$ is a morphism in $\A\0$,
such that the induced morphisms $\ker(a)\to \ker(b)$ and $\cok(a)\to
\cok(b)$ are isomorphisms, then $a$ and $b$ are equivalent.
\end{enumerate}
\end{Co}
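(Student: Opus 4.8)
\emph{Overall plan.} I would deduce both parts from Lemma~\ref{cnobili}, together with the elementary fact that a functor of groupoids is an equivalence if and only if it is bijective on $\pi_0$ and an isomorphism on $\pi_1$; since all the groupoids here are Picard categories, $\pi_1$ does not depend on the chosen object, so there is really a single such condition to check.

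\emph{Part (i).} The functor $\Hom_{\A\0}(a,b)\to\Hom_{\A\0}(a,b')$ is post-composition with the given morphism $b\to b'$. Under the identification $\pi_1(\Hom_{\A\0}(a,-))=\hom_\A(\cok(a),\ker(-))$ it is, on $\pi_1$, the map induced by $\ker(b)\to\ker(b')$, hence an isomorphism by hypothesis; note this part uses nothing about $A_0$. On $\pi_0$ I would use that the exact sequence of Lemma~\ref{cnobili} is natural in its second variable, producing a commutative diagram with exact rows
$$
\xymatrix@C=1pc{
0\ar[r]&\Ext^1_\A(\cok(a),\ker(b))\ar[r]\ar[d]&\pi_0(\Hom_{\A\0}(a,b))\ar[r]\ar[d]&\Hom_\ee(Ch(a),Ch(b))\ar[r]\ar[d]&0\\
0\ar[r]&\Ext^1_\A(\cok(a),\ker(b'))\ar[r]&\pi_0(\Hom_{\A\0}(a,b'))\ar[r]&\Hom_\ee(Ch(a),Ch(b'))\ar[r]&0.
}
$$
The left vertical map is an isomorphism because $\ker(b)\to\ker(b')$ is. For the right one, I would first record that a morphism of $\ee$ whose two underlying morphisms of $\A$ are isomorphisms is itself invertible in $\ee$ (a one-line check moving $f^*$ and $g_*$ past each other inside $\Ext^2$); applied to $Ch(b)\to Ch(b')$, whose underlying morphisms $\cok(b)\to\cok(b')$ and $\ker(b)\to\ker(b')$ are isomorphisms by assumption, this makes the right vertical map post-composition with an isomorphism of $\ee$, hence bijective. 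The five lemma then shows the middle vertical map is an isomorphism, which proves (i).

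\emph{Part (ii).} Write $\phi\colon a\to b$ for the given morphism; I would show it is an equivalence in the $2$-category $\A\0$. Applying (i) with first object $a$ (allowed, since $A_0$ is projective) shows $\phi_*\colon\Hom_{\A\0}(a,a)\to\Hom_{\A\0}(a,b)$ is an equivalence; applying it with first object $b$ (allowed, since $B_0$ is projective) shows $\phi_*\colon\Hom_{\A\0}(b,a)\to\Hom_{\A\0}(b,b)$ is an equivalence. Essential surjectivity of the second functor yields $\psi\colon b\to a$ with $\phi\circ\psi\sim\id_b$. Composing this $2$-arrow with $\phi$ on the right gives $\phi\circ(\psi\circ\phi)\sim\phi=\phi\circ\id_a$, that is, $\phi_*(\psi\circ\phi)\sim\phi_*(\id_a)$ for the first functor; since that functor is fully faithful, $\psi\circ\phi\sim\id_a$. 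Hence $\psi$ is a quasi-inverse of $\phi$, so $\phi$ is an equivalence and $a$ and $b$ are equivalent.

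\emph{Main obstacle.} The only input that is not purely formal is Lemma~\ref{cnobili}, so the step I expect to need the most care is the $\pi_0$ argument in (i): one must verify that the exact sequence of Lemma~\ref{cnobili} is genuinely natural in $b$ and compatible with the functor $Ch$, and check that componentwise isomorphisms in $\ee$ are invertible. Granting this, the five lemma and the $2$-categorical bookkeeping in (ii) are routine.
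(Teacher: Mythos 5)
Your proposal is correct and takes essentially the same route as the paper: part (i) is the paper's own argument, namely using Lemma~\ref{cnobili} (together with the $\pi_1$-description) to see that $\Hom_{\A\0}(a,b)\to\Hom_{\A\0}(a,b')$ is an isomorphism on $\pi_0$ and $\pi_1$, which you merely spell out via naturality, the invertibility of componentwise-iso morphisms of $\ee$, and the five lemma. For part (ii) the paper cites the Yoneda lemma for $2$-categories after noting that $\Hom_{\A\0}(x,a)\to\Hom_{\A\0}(x,b)$ is an equivalence for all $x\in\A\0_c$; you simply unwind that citation at the two instances $x=a$ and $x=b$ to construct the quasi-inverse explicitly, which is a valid (and slightly more self-contained) rendering of the same argument.
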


\begin{proof} i) In this case we have a nice description for
$\pi_i(\Hom_{\A\0}(a,-))$, which shows that the functor
$\Hom_{\A\0}(a,b)\to \Hom_{\A\0}(a,b')$ yields an isomorphism on
$\pi_0$ and $\pi_1$ and hence is an equivalence of categories. ii)
By the same reason the induced functor $\Hom_{\A\0}(x,a)\to
\Hom_{\A\0}(x,b)$ is an equivalence of categories for all $x\in
\A\0_c$ and hence we can use the Yoneda lemma for 2-categories.
\end{proof}
\section{The 2-category $\A\0_c$} In this section we will
assume that $\A$ is an abelian category with enough projective
objects.

We let $\A\0_c$ be the full 2-subcategory of the 2-category $\A\0$
consisting of objects $a:A_1\to A_0$ such that $A_0$ is a projective
object of $\A$.
\begin{Th}\label{chemia} If $\A$ is an abelian category with enough projective
objects, then $\A\0_c$ is a 2-abelian $\sf Gpd$-category. The
subcategory of discrete and codiscrete objects are equivalent to
$\A$.
\end{Th}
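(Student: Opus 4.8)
The plan is to verify the two assertions of Theorem~\ref{chemia} separately, treating first the ``$2$-abelian $\sf Gpd$-category'' structure and then the identification of the discrete and codiscrete parts with $\A$. Recall that $\A\0$ is, up to the obvious reindexing, the category of two-term chain complexes in $\A$ concentrated in degrees $0$ and $1$, with chain homotopies as $2$-arrows; passing to $\A\0_c$ amounts to requiring the degree-$0$ term to be projective. Throughout I would use freely the homotopy-theoretic interpretation: $\pi_1\Hom_{\A\0}(a,b)=\hom_\A(\cok a,\ker b)$, and, when $A_0$ is projective, the description of $\pi_0\Hom_{\A\0}(a,b)$ furnished by Lemma~\ref{cnobili}. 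These are precisely the tools needed to compute $2$-kernels, $2$-cokernels, and to check that the canonical comparison $1$-cells between them are equivalences.

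First I would set up the additive ($\sf Gpd$-enriched) structure: $\A\0_c$ has a zero object $0\to 0$, and biproducts are formed componentwise, $A_0\os B_0$ being projective when $A_0,B_0$ are. Then, for a $1$-cell $(f_0,f_1):a\to b$ I would exhibit the $2$-kernel and $2$-cokernel by the usual mapping-path and mapping-cone constructions for chain complexes: the $2$-kernel of $(f_0,f_1)$ is represented by the object $A_1\os B_1 \to A_0$ (so that its kernel and cokernel compute the homotopy kernel of the chain map), and dually the $2$-cokernel by $B_1\to B_0\os A_0$, whose degree-$0$ term is again projective. One must check these satisfy the correct $2$-universal property in the $\sf Gpd$-enriched sense; this reduces, via Lemma~\ref{cnobili} and the $\pi_1$-formula, to the classical fact that these constructions compute homotopy (co)limits of the two-term complexes, together with the observation that the relevant $\Ext^1$ and $\Ext^2$ obstruction groups match up. The crucial axiom of a $2$-abelian $\sf Gpd$-category --- that every $1$-cell factors, canonically and up to equivalence, through the ``$2$-image'' computed either as the $2$-kernel of the $2$-cokernel or as the $2$-cokernel of the $2$-kernel, and that these two agree --- is where the real content lies. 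I would prove it by choosing, for a given $(f_0,f_1)$, a projective presentation and reducing to a diagram chase on kernels and cokernels in $\A$; Corollary~\ref{cof}(ii) is exactly the tool that guarantees the two candidate $2$-images are equivalent, since one checks they induce isomorphisms on $\ker$ and $\cok$.

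Next I would identify the discrete and codiscrete (connected) objects. An object $a:A_1\to A_0$ is \emph{discrete} when $\Hom_{\A\0}(a,b)$ is discrete (equivalently $\pi_1$ vanishes functorially), which by the $\pi_1$-formula forces $\cok(a)=0$, i.e.\ $a$ is an epimorphism; the full subcategory of such objects, with $1$-cells modulo homotopy, is equivalent via $a\mapsto \ker(a)$ to $\A$ --- surjectivity of this assignment uses enough projectives (every object of $\A$ is the kernel of a map from a projective, namely take a projective presentation), full faithfulness follows again from Lemma~\ref{cnobili} since with $\cok(a)=0$ the $\Ext^1$ term and the $\Ext^2$-class both vanish and $\Hom_\ee(Ch(a),Ch(b))=\hom_\A(\ker a,\ker b)$. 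Dually, the codiscrete objects are the monomorphisms $a:A_1\into A_0$ with $A_0$ projective, and $a\mapsto\cok(a)$ gives the equivalence with $\A$; here Corollary~\ref{cof}(i) handles the independence of the chosen projective resolution.

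The step I expect to be the main obstacle is verifying the exactness axiom in full: namely that in $\A\0_c$ every monic $1$-cell (one whose $2$-kernel is trivial) is the $2$-kernel of its $2$-cokernel, and dually. The ``dually'' is not automatic because $\A\0_c$ is \emph{not} self-dual --- we imposed projectivity of $A_0$ but not injectivity of $A_1$ --- so the two halves require genuinely different arguments, and the covariant half is the delicate one: given a monic $(f_0,f_1)$, forming its $2$-cokernel $B_1\to B_0\os A_0$ and then the $2$-kernel of that, one must produce the comparison $1$-cell back to $a$ and prove it is an equivalence, which amounts to a non-trivial snake-type chase tracking how $\ker$, $\cok$ and the $\Ext^2$-class behave through the cone and path-space constructions, and then invoking Lemma~\ref{cnobili} at the projective object $A_0$ (respectively $B_0\os A_0$) to conclude the comparison is $\pi_0$- and $\pi_1$-bijective. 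Once this is in place, the remaining $2$-abelian axioms (existence of $2$-kernels/$2$-cokernels, the biproduct/additivity compatibilities) are the routine verifications sketched above.
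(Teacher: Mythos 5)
Your sketch misses the technical heart of the proof, and the explicit constructions it rests on are incorrect. The $2$-kernel of $(f_0,f_1):a\to b$ in $\A\0$ is the complex $k':A_1\to K$ with $K=A_0\times_{B_0}B_1$ the pullback of $f_0$ and $b$, and the $2$-cokernel is $q':Q\to B_0$ with $Q$ the pushout of $f_1$ along $a$; your candidates $A_1\os B_1\to A_0$ and $B_1\to B_0\os A_0$ put the shifted summand in the wrong degree and omit the pullback/pushout (there is not even a natural differential, since $B_1$ carries no map to $A_0$), and already for $a=(0\to P)$, $b=(P\xto{\id}P)$, $f=(\id,0)$ they have the wrong $\ker$ and $\cok$, i.e.\ the wrong $\pi_1$ and $\pi_0$. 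This is not a cosmetic slip: because your (incorrect) $2$-kernel happens to have the projective object $A_0$ in degree $0$, you never confront the actual difficulty, namely that the genuine $2$-kernel leaves $\A\0_c$ --- the pullback $K$ need not be projective even when $A_0$ and $B_0$ are. The proof has to replace $k'$ by an equivalent object of $\A\0_c$ (choose a projective $C_0\onto K$ and pull back to get $c:C_1\to C_0$) and then verify, via Corollary \ref{cof}(i), that this replacement still enjoys the $2$-universal property against every test object of $\A\0_c$; only the $2$-cokernel stays in $\A\0_c$ for free. Nothing in your proposal supplies this replacement mechanism, which is exactly where the hypothesis ``enough projectives'' and Lemma \ref{cnobili} do their work.

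The verification of $2$-abelianness is likewise only gestured at: you formulate ``the crucial axiom'' as agreement of two $2$-images and defer it to an unspecified ``snake-type chase,'' whereas the axioms to be checked (in Dupont's sense) are verified by first characterizing faithful, fully faithful, cofaithful and fully cofaithful $1$-cells in terms of the induced maps on $\ker$ and $\cok$ (the analogues of Lemmas \ref{32200809} and \ref{33200809}, whose proofs genuinely use the exact sequence of Lemma \ref{cnobili} and the behaviour of the $\Ext^2$-class, not just the $\pi_1$-formula), and then computing $\Sigma$, $\Omega$, $Pip(f)$ and $Copip(f)$ explicitly in $\A\0_c$ so that the canonical comparison morphisms ($\omega_f$ from the coroot of the pip to the kernel, and dually) are seen to be equivalences; none of this appears in your outline. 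Finally, a smaller point: your conventions are backwards. Since $\pi_1(\Hom_{\A\0}(x,a))=\hom_\A(\cok(x),\ker(a))$ and discreteness of $a$ means $a\to 0$ is faithful, the discrete objects of $\A\0_c$ are the \emph{monomorphisms}, with ${\sf Dis}(\A\0_c)\simeq\A$ via $a\mapsto\cok(a)$, while the epimorphisms, identified with $\A$ via $a\mapsto\ker(a)$, are the codiscrete ones; your two equivalences are substantively fine (full faithfulness from Lemma \ref{cnobili}, essential surjectivity from enough projectives), but you applied the $\pi_1$-formula to $\Hom(a,-)$ instead of $\Hom(-,a)$ and so swapped the two labels.
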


The rest of this work is devoted to the proof.  The first
observation is that the direct sum in $\A$ yields an additive $\sf
Gpd^*$-category structure. The next task is to characterize
faithful, fully faithful, cofaithful and fully
cofaithful morphisms.

\begin{Le} \label{32200809} \begin{enumerate}
\item[(i)]  A  morphism $(f_0,f_1):a\to b$
is faithful in $\A\0_c$ iff the morphism
$$\begin{pmatrix}-a\\ f_1\end{pmatrix}: A_1\to A_0\oplus B_1$$
is a monomorphism in $\A$.
 \item[(ii)] Let   $(f_0,f_1):a\to b$ be a  morphism in $\A\0_c$ and
 let
$$g:\ker(a)\to \ker(b), \ \ \ h:\cok(a)\to \cok(b)$$
be  induced morphisms.  Then $(f_0,f_1)$ is fully faithful in
$\A\0_c$ iff $g$ is an isomorphism and $h$ is  a monomorphism in
$\A$.
\end{enumerate}
\end{Le}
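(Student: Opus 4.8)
My plan is to compute the groupoids $\Hom_{\A\0_c}(x,a)$ and $\Hom_{\A\0_c}(x,b)$ and track when the induced functor between them is faithful, resp.\ fully faithful, on hom-sets of $2$-morphisms — since ``faithful'' and ``fully faithful'' for a morphism in a $\sf Gpd$-category are defined via the induced functors $(f_0,f_1)_*\colon \Hom(x,a)\to\Hom(x,b)$ being faithful, resp.\ fully faithful, for all test objects $x$. For part (i), I would fix $x\colon X_1\to X_0$ with $X_0$ projective, take a morphism $(p_0,p_1)\colon x\to a$, and examine when two $2$-arrows $\al,\al'\colon X_0\to A_1$ between $(p_0,p_1)$ and another lift become equal after postcomposition with $(f_0,f_1)$. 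The condition $f_1\al=f_1\al'$ and $b f_1 \al = \cdots$ should reduce, after writing $\gamma=\al-\al'\colon X_0\to A_1$, to the statement that $\smat{-a\\f_1}\gamma=0$ forces $\gamma=0$ for all $\gamma$ arising this way; using $X_0$ projective and the freedom to choose $x$ (e.g.\ $x=\id_{X_0}$ or a projective presentation) one gets that $\smat{-a\\f_1}$ must be a monomorphism, and conversely that condition clearly suffices. So (i) is essentially a direct unwinding of the homotopy relations defining $2$-arrows in $\A\0$.

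For part (ii), I would use Lemma~\ref{cnobili} and Corollary~\ref{cof} as the main tool: full faithfulness of $(f_0,f_1)_*$ means that for every $x\in\A\0_c$ the functor $\Hom_{\A\0_c}(x,a)\to\Hom_{\A\0_c}(x,b)$ is an equivalence, i.e.\ an isomorphism on $\pi_1$ and on $\pi_0$. On $\pi_1$ we have $\pi_1\Hom_{\A\0}(x,a)=\hom_\A(\cok(x),\ker(a))$ and similarly for $b$, and the map is postcomposition with $g\colon\ker(a)\to\ker(b)$; this is an isomorphism for all $x$ (equivalently, for all objects $\cok(x)$, which by enough projectives ranges over all of $\A$) exactly when $g$ is an isomorphism. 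For $\pi_0$, since $X_0$ is projective, Lemma~\ref{cnobili} gives the exact sequence relating $\pi_0\Hom_{\A\0}(x,a)$ to $\Ext^1_\A(\cok(x),\ker(a))$ and $\Hom_\ee(Ch(x),Ch(a))$; once $g$ is an isomorphism the $\Ext^1$-terms are identified, so injectivity/surjectivity on $\pi_0$ is controlled by the map $\Hom_\ee(Ch(x),Ch(a))\to\Hom_\ee(Ch(x),Ch(b))$. A morphism in $\ee$ into $Ch(a)$ is a compatible pair $(\cok(x)\to\cok(a),\ker(x)\to\ker(a))$, and composing with $(h,g)$ lands in $Ch(b)$; tracing through, injectivity on $\pi_0$ for all $x$ forces $h=\cok(f)$ to be a monomorphism, and (given $g$ iso) the monomorphism condition on $h$ suffices to make $(f_0,f_1)_*$ faithful; the ``fully'' part (surjectivity on $\pi_0$, which would need $h$ epi as well) is \emph{not} required here — only faithfulness is asked for on objects beyond the $\pi_1$-level — so I must be careful that the definition of ``fully faithful morphism'' in a $\sf Gpd$-category means the induced functors are fully faithful \emph{as functors of groupoids}, which is isomorphism on $\pi_1$ plus injectivity (not surjectivity) on $\pi_0$. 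This is the subtle point and where I expect the real work to lie.

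The main obstacle, then, is pinning down exactly which $\pi_0$/$\pi_1$ conditions correspond to ``faithful'' versus ``fully faithful'' in the $2$-categorical sense, and then verifying that the $\ee$-theoretic injectivity statement ``$\Hom_\ee(Ch(x),Ch(a))\hookrightarrow\Hom_\ee(Ch(x),Ch(b))$ for all $x$'' is equivalent to ``$h$ is a monomorphism in $\A$.'' For the converse direction of this equivalence I would test with $x$ chosen so that $\cok(x)$ is an arbitrary object $M$ and $\ker(x)$ is, say, $0$ or a projective, reducing the $\ee$-condition to injectivity of $\hom_\A(M,\cok(a))\to\hom_\A(M,\cok(b))$, i.e.\ to $h$ being a monomorphism; for the forward direction I would take $M=\ker(h)$ and produce a nonzero element of the kernel. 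Modulo this bookkeeping, parts (i) and (ii) should both follow by assembling Lemma~\ref{cnobili}, the $\pi_1$-formula, and the explicit description of homotopies in $\A\0$.
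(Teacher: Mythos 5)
Your outline follows essentially the paper's own route: faithfulness and full faithfulness are tested on the hom-groupoids, the formula $\pi_1(\Hom_{\A\0}(x,a))=\hom_\A(\cok(x),\ker(a))$ handles part (i) and the $\pi_1$-half of (ii) (using that, by enough projectives, $\cok(x)$ realizes every object of $\A$ as $x$ ranges over $\A\0_c$), and Lemma \ref{cnobili} reduces the $\pi_0$-condition to injectivity of $\Hom_\ee(Ch(x),Ch(a))\to\Hom_\ee(Ch(x),Ch(b))$ for all $x$. Your corrected reading of ``fully faithful'' as isomorphism on $\pi_1$ plus injectivity (not surjectivity) on $\pi_0$ is exactly what the paper uses; note that your first formulation (``an equivalence, i.e.\ an isomorphism on $\pi_1$ and on $\pi_0$'') would have characterized a different class of morphisms, but you retract it yourself.

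The one inaccurate step in your sketch is the translation of the $\ee$-condition: for $x$ with $\ker(x)=0$ and $\cok(x)=M$ one has $Ch(x)=(M,0,0)$, and $\Hom_\ee((M,0,0),Ch(a))$ is not all of $\hom_\A(M,\cok(a))$ but only the subgroup of those $u$ with $u^*(ch(a))=0$. This does not affect the easy direction: if $g$ is an isomorphism and $h$ is mono, then $(u,v)\mapsto (hu,gv)$ is injective on $\Hom_\ee(Ch(x),Ch(a))$ for every $x$, with no special choice of $x$ needed. But in the direction you yourself flagged as ``the real work''--- producing a nonzero kernel element when $K=\ker(h)\neq 0$ --- you must verify that your candidate, the pair $(i,0)$ with $i:K\to\cok(a)$ the inclusion, really is a morphism $(K,0,0)\to Ch(a)$ in $\ee$, i.e.\ that $i^*(ch(a))=0$; this is where the compatibility encoded in $Ch(f)=(h,g)$ enters: $g_*(ch(a))=h^*(ch(b))$, hence $g_*i^*(ch(a))=(hi)^*(ch(b))=0$, and since $g$ is an isomorphism $i^*(ch(a))=0$. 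With that short verification inserted (and with $(K,0,0)$ realized as $Ch(x)$ for $x\in\A\0_c$ a monomorphism $X_1\to X_0$, $X_0$ projective, with cokernel $K$), your plan coincides with the paper's proof of (ii). A small remark on (i): the test object $x=\id_{X_0}$ has $\cok(x)=0$ and hence trivial $\pi_1$, so it is useless; the ``projective presentation'' choice is the one that works, and it shows faithfulness is equivalent to $g$ being mono, which agrees with the stated criterion because the kernel of $\smat{-a\\ f_1}$ is $\ker(a)\cap\ker(f_1)=\ker(g)$.
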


\begin{proof} i) By definition $(f_0,f_1):a\to b$ is faithful iff the
induced homomorphism
$$\pi_1(\Hom_{\A\0}(x,a))\to \pi_1(\Hom_{\A\0}(x,b))$$
is a monomorphism for all $x\in \A\0_c$. But the homomorphism in the
question is the same as
$$\hom_\A(\cok(x),\ker(a))\to \hom_\A(\cok(x),\ker(b))$$
Since $\A$ has  enough projective objects, any object in $\A$ is
isomorphic to an object of the form $\cok(x)$ for a suitable $x\in
\A\0_c$. Thus $(f_0,f_1):a\to b$ is faithful iff the induced
homomorphism $g:\ker(a)\to \ker(b)$ is a monomorphism and the i)
follows. ii) By definition $(f_0,f_1):a\to b$ is fully faithful iff
the induced functor
$$(f_0,f_1)^x:\Hom_{\A\0}(x,a))\to \Hom_{\A\0}(x,b))$$
is full and faithful. This happens iff the functor $(f_0,f_1)^x$
yields an isomorphism on $\pi_1$ and  a monomorphism on $\pi_0$.
Thus for all $x\in \A\0_c$ the induced homomorphism
$$\hom_\A(\cok(x),\ker(a))\to \hom_\A(\cok(x),\ker(b))$$ is an
isomorphism  and the induced map $$\pi_0(\Hom_{\A\0}(x,a))\to
\pi_1(\Hom_{\A\0}(x,b))$$ is a monomorphism. From the first
condition follows that $g:\ker(a)\to \ker(b)$ is an isomorphism.
This fact and Lemma \ref{cnobili} yields that the induced map
$Ch(a)\to Ch(b)$ is a monomorphism in $\ee$. Since $g$ is an
isomorphism without loss of generality we can identify $\ker(a)$ and
$\ker(b)$. Then we have $ch(a)=h^*(ch(b))$. We set $K=\ker(h)$ and
let $i:K\to \cok(a)$ be an inclusion. We have
$i^*(ch(a))=(hi)^*(ch(b))=0$. Hence $(i,0): (K,0,0)\to Ch(a)$ is a
well defined morphism in $\ee$ which is annulated by the
monomorphism $Ch(a)\to Ch(b)$. Hence $K=0$ and the result follows.
\end{proof}

Recall that $a$ is discrete if $a\to 0$ is faithful. Hence an object
$a:A_1\to A_0$ in $\A\0_c$ is discrete iff $a$ is a monomorphism in
$\A$.
\begin{Co} The functor ${\sf Dis}(\A\0_c)\to \A$
given by $a\mapsto \cok(a)$ is an equivalence of categories.
\end{Co}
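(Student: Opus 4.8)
The goal is to show that $\mathsf{Dis}(\A\0_c)\to\A$, $a\mapsto\cok(a)$, is an equivalence. By the remark just above the corollary, $\mathsf{Dis}(\A\0_c)$ consists of monomorphisms $a:A_1\incl A_0$ with $A_0$ projective. I would prove the three standard properties: essential surjectivity, fullness, and faithfulness of the induced functor, with the hom-groupoids replaced by their $\pi_0$ since everything in sight is (homotopically) discrete.

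**Essential surjectivity.** Given $M\in\A$, since $\A$ has enough projectives choose an epimorphism $P\onto M$ with $P$ projective, let $A_1$ be its kernel, and let $a:A_1\incl P$ be the inclusion. Then $a$ is a monomorphism with projective codomain, hence an object of $\mathsf{Dis}(\A\0_c)$, and $\cok(a)\cong M$. So the functor is essentially surjective.

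**Full faithfulness.** Let $a:A_1\incl A_0$ and $b:B_1\incl B_0$ be discrete objects of $\A\0_c$; write $M=\cok(a)$, $N=\cok(b)$. Since $a$ and $b$ are monos we have $\ker(a)=\ker(b)=0$, so by Lemma~\ref{cnobili} (applicable because $A_0$ is projective) the sequence
$$0\to\Ext^1_\A(M,\ker(b))\to\pi_0(\Hom_{\A\0}(a,b))\to\Hom_\ee(Ch(a),Ch(b))\to 0$$
degenerates: $\ker(b)=0$ kills the $\Ext^1$ term, and $Ch(a)=(M,0,0)$, $Ch(b)=(N,0,0)$, so $\Hom_\ee(Ch(a),Ch(b))=\hom_\A(M,N)$ (the $\Ext^2$-compatibility condition is vacuous when the second coordinate is $0$). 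Thus $\pi_0(\Hom_{\A\0}(a,b))\xto{\ \sim\ }\hom_\A(M,N)$, and one checks this isomorphism is exactly the map induced by $\cok$. Finally the hom-groupoid $\Hom_{\A\0}(a,b)$ is discrete since $\pi_1=\hom_\A(\cok(a),\ker(b))=\hom_\A(M,0)=0$, so passing to $\pi_0$ loses nothing: $\mathsf{Dis}(\A\0_c)$ is an honest category and $\cok$ induces a bijection on hom-sets. Hence the functor is fully faithful, and combined with essential surjectivity it is an equivalence.

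**Main obstacle.** The only real content is the identification $\pi_0(\Hom_{\A\0}(a,b))\cong\hom_\A(M,N)$, and this is handed to us almost for free by Lemma~\ref{cnobili} once we observe that $\ker(a)=\ker(b)=0$ forces both the $\Ext^1$ term and the $\Ext^2$-datum in $\ee$ to vanish; the slightly delicate point is to confirm that the resulting bijection is genuinely the functorial map $a\mapsto\cok(a)$ rather than merely an abstract isomorphism, which follows by unwinding the definition of $Ch$ and the construction in Lemma~\ref{cnobili}. Everything else—enough projectives giving essential surjectivity, discreteness of the hom-groupoids—is routine.
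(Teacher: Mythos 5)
Your proof is correct and is essentially the argument the paper leaves implicit: essential surjectivity from enough projectives, and full faithfulness by specializing Lemma \ref{cnobili} to the case $\ker(a)=\ker(b)=0$, where the $\Ext^1$ term and the $\Ext^2$-data vanish so that $\pi_0(\Hom_{\A\0}(a,b))\cong\hom_\A(\cok(a),\cok(b))$ via the map induced by $\cok$, while $\pi_1=\hom_\A(\cok(a),\ker(b))=0$ makes the hom-groupoids discrete. (One could avoid Lemma \ref{cnobili} by lifting a map $\cok(a)\to\cok(b)$ along $B_0\onto\cok(b)$ using projectivity of $A_0$, with homotopies accounting exactly for the choice of lift, but your route is equally valid and uses only the paper's stated tools.)
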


\begin{Le} \label{33200809} \begin{enumerate}
\item[(i)]  A  morphism $(f_0,f_1):a\to b$
is cofaithful iff the morphism
$$(f_0,b): A_0\oplus B_1\to B_0$$
is an epimorphism in $\A$.
 \item[(ii)] Let   $(f_0,f_1):a\to b$ be a  morphism in $\A\0_c$ and
 let
$$g:\ker(a)\to \ker(b), \ \ \ h:\cok(a)\to \cok(b)$$
be  induced morphisms.  Then $(f_0,f_1)$ is fully cofaithful in
$\A\0_c$ iff $h$ is an isomorphism and $g$ is  an epimorphism in
$\A$.
\end{enumerate}
\end{Le}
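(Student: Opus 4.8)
The plan is to dualise the proof of Lemma~\ref{32200809}, testing now the precomposition functors $\Hom_{\A\0}(b,x)\to\Hom_{\A\0}(a,x)$ induced by $(f_0,f_1)$, with $x$ ranging over $\A\0_c$; on fundamental groups each such functor is the map $h^*:\hom_\A(\cok(b),\ker(x))\to\hom_\A(\cok(a),\ker(x))$. The elementary point behind (i) is that, since $\A$ has enough projectives, every object $N$ of $\A$ occurs as $\ker(x)$ for some $x\in\A\0_c$: simply take $x:N\to 0$, whose target is the projective object $0$. As $(f_0,f_1)$ is cofaithful precisely when every such functor is faithful, i.e. injective on $\pi_1$, it is cofaithful iff $\hom_\A(h,N)$ is a monomorphism for all $N$, i.e. iff $h$ is an epimorphism. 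A routine cokernel computation --- using that the defining square of $(f_0,f_1)$ commutes --- identifies $\cok(h)$ with the cokernel of $(f_0,b):A_0\oplus B_1\to B_0$, so $h$ is an epimorphism iff $(f_0,b)$ is one; this proves (i).

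For (ii), recall (as used already in Lemma~\ref{32200809}) that a functor of groupoids is full and faithful iff it is an isomorphism on $\pi_1$ and a monomorphism on $\pi_0$. Thus $(f_0,f_1)$ is fully cofaithful iff, for every $x\in\A\0_c$, the map $h^*$ above is an isomorphism \emph{and} $\pi_0(\Hom_{\A\0}(b,x))\to\pi_0(\Hom_{\A\0}(a,x))$ is injective. By the Yoneda lemma and the fact, noted above, that $\ker(x)$ runs through all objects of $\A$, the first condition for all $x$ forces $h$ to be an isomorphism. Assuming $h$ invertible, I would identify $\cok(a)$ with $\cok(b)$, so that the induced morphism $Ch(a)\to Ch(b)$ in $\ee$ takes the form $(\id,g)$ with $g_*(ch(a))=ch(b)$. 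Since $A_0$ and $B_0$ are projective, Lemma~\ref{cnobili} applies to $\Hom_{\A\0}(a,x)$ and to $\Hom_{\A\0}(b,x)$, and as its short exact sequences are functorial, $(f_0,f_1)$ induces a commutative ladder between them in which the map on the $\Ext^1$-terms is $h^*$, hence an isomorphism. A short diagram chase then shows that $\pi_0(\Hom_{\A\0}(b,x))\to\pi_0(\Hom_{\A\0}(a,x))$ is injective for all $x$ if and only if $\Hom_\ee(Ch(b),Ch(x))\to\Hom_\ee(Ch(a),Ch(x))$ is injective for all $x$.

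It remains to identify this last condition with ``$g$ is an epimorphism''. The map in question is precomposition with $(\id,g)$, that is $(u,v)\mapsto(u,vg)$, which is visibly injective when $g$ is an epimorphism; this gives the ``if'' direction of (ii). For the converse I would imitate the end of the proof of Lemma~\ref{32200809}(ii): put $Q=\cok(g)$ with projection $p:\ker(b)\to Q$, so that $p_*(ch(b))=p_*g_*(ch(a))=0$ and $(0,p):Ch(b)\to(0,Q,0)$ is a morphism in $\ee$. Now $(0,Q,0)=Ch(x)$ for the object $x:Q\to 0$ of $\A\0_c$ (its target $0$ is projective and $\Ext^2_\A(0,Q)=0$, so $ch(x)=0$), so injectivity of $\Hom_\ee(Ch(b),Ch(x))\to\Hom_\ee(Ch(a),Ch(x))$ applies to $(0,p)$. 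Since $(0,p)$ precomposed with $(\id,g)$ equals $(0,pg)=(0,0)$, it follows that $(0,p)=0$, hence $p=0$, hence $Q=0$; thus $g$ is an epimorphism. The points needing care are the variance bookkeeping in the functoriality of the sequences of Lemma~\ref{cnobili} and the verification that the test object $(0,Q,0)$ really is of the form $Ch(x)$ with $x\in\A\0_c$; everything else is formally dual to Lemma~\ref{32200809}.
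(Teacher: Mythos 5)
Your proof is correct and takes essentially the same route as the paper's: dualise Lemma \ref{32200809}, use the description of $\pi_1$ plus Lemma \ref{cnobili} for the $\pi_0$-analysis (legitimate here since $A_0$ and $B_0$ are projective), and kill $\cok(g)$ by testing against the object $(0,\cok(g),0)=Ch(\cok(g)\to 0)$ with $\cok(g)\to 0$ in $\A\0_c$. You merely make explicit a few steps the paper compresses --- the identification $\cok(h)\cong\cok\bigl((f_0,b)\bigr)$ behind part (i), the functorial ladder of exact sequences, and the easy ``if'' direction --- and your use of injectivity of $\Hom_\ee(Ch(b),Ch(x))\to\Hom_\ee(Ch(a),Ch(x))$ only for $x\in\A\0_c$, rather than full epimorphy of $Ch(a)\to Ch(b)$ in $\ee$, is a harmless (indeed slightly more careful) variant.
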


\begin{proof} By definition $(f_0,f_1):a\to b$
is cofaithful (resp. fully cofaithful) iff  the induced functor
$$(f_0,f_1)_x:\Hom_{\A\0}(b,x))\to \Hom_{\A\0}(a,x))$$
is faithful (resp. full and faithful). Since any object of $\A$ is
of the form $\ker(a)$ for a suitable $a\in \A\0_c$, it follows from
the description of $\pi_i(\Hom_{\A\0})$ given in Section \ref{A02}
(essentially by the same argument as in Lemma \ref{32200809})  that
this happens iff the map $h$ is a monomorphism (resp. $h$ is an
isomorphism and the map $Ch(a)\to Ch(b)$ is an epimorphism in
$\ee$). This already proves i) and in ii) it remains to show  $g$ is
an epimorphism. We set $C=\cok(g)$ with canonical morphism
$q:\ker(b)\to C$. Since $h$ is an isomorphism we can and we will
identify $\cok(a)$ and $\cok(b)$. Then we will have
$ch(b)=g_*(ch(a))$. Hence $q_*(ch(b))=q_*g_*(ch(a))=0$. Thus
 $:(q,0,0):\Ch(b)\to (C,0,0)$ is a well-defined morphism which is
annulated by the epimorphism $Ch(a)\to Ch (b)$. Thus $C=0$.

\end{proof}

 For an object $x:X_1\to X_0$  of $\A\0$ we choose a
projective object $P_0$ and an epimorphism $\epsilon:P_0\to X_0$ and
consider the pull-back diagram
$$\xymatrix{P_1\ar[r]^{\epsilon_1}\ar[d]_{p}&X_1\ar[d]^x\\
P_0\ar[r]_{\epsilon_0}&X_0}$$ It is clear that the induced morphisms
$\ker(p)\to \ker(x)$ and $\cok(p)\to \cok(x)$ are isomorphisms and
$p\in \A\0_c$. We call $p$  a \emph{replacement} of $x$. Sometimes
it is denoted by $x^{rep}$. The following easy Lemma shows that this
is well-defined.

\begin{Le} Let $f:a\to x$ be a morphism in $\A\0$ which induce
isomorphism on $\ker$ and $\cok$. If $a\in \A\0_c$  then $f$ has the
lifting to $x^{rep}$, which is unique up to unique homotopy.
\end{Le}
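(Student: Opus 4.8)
The plan is to assemble the lift from its two components: lift $f_0$ along the epimorphism $\epsilon_0\colon P_0\to X_0$ using that $A_0$ is projective, and then lift $f_1$ using the universal property of the pull-back square that defines $P_1$. Concretely, write $f=(f_0,f_1)$ and let $\epsilon=(\epsilon_0,\epsilon_1)\colon x^{rep}\to x$ be the canonical morphism attached to the replacement, so that a lift of $f$ means a morphism $\tilde f\colon a\to x^{rep}$ in $\A\0$ with $\epsilon\circ\tilde f=f$. Since $A_0$ is projective I would first choose $\tilde f_0\colon A_0\to P_0$ with $\epsilon_0\tilde f_0=f_0$. Then $\epsilon_0(\tilde f_0a)=f_0a=xf_1$, so the pair $(\tilde f_0a,\,f_1)$ is compatible over $X_0$ and determines a unique $\tilde f_1\colon A_1\to P_1$ with $p\tilde f_1=\tilde f_0a$ and $\epsilon_1\tilde f_1=f_1$. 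The pair $\tilde f=(\tilde f_0,\tilde f_1)$ is then a morphism $a\to x^{rep}$ in $\A\0$ with $\epsilon\circ\tilde f=f$, and it induces isomorphisms on $\ker$ and $\cok$ by two-out-of-three (since both $f$ and $\epsilon$ do); this settles existence.

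For the uniqueness clause I would first fix the correct reading: two lifts $\tilde f,\tilde g$ are to be compared by homotopies $\alpha\colon\tilde f\then\tilde g$ \emph{over $f$}, that is, those whose whiskering by $\epsilon$ is the identity homotopy of $f$; since this whiskering is computed as $\epsilon_1\alpha$, the condition is $\epsilon_1\alpha=0$. To produce such an $\alpha$, observe that $\epsilon_0(\tilde f_0-\tilde g_0)=0$, so $(\tilde f_0-\tilde g_0,\,0)$ is compatible over $X_0$ and defines $\alpha\colon A_0\to P_1$ with $p\alpha=\tilde f_0-\tilde g_0$ and $\epsilon_1\alpha=0$. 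One then checks $\alpha a=\tilde f_1-\tilde g_1$ by noting that both sides have image $(\tilde f_0-\tilde g_0)a$ under $p$ and image $0$ under $\epsilon_1$ (because $\epsilon_1\tilde f_1=f_1=\epsilon_1\tilde g_1$), and invoking the pull-back property. For uniqueness of $\alpha$: if $\alpha,\alpha'$ are two homotopies over $f$ from $\tilde f$ to $\tilde g$, then $\gamma=\alpha-\alpha'$ satisfies $p\gamma=0$ and $\epsilon_1\gamma=0$, hence $\gamma=0$, once more by the pull-back property.

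The one place where care is genuinely needed — the real obstacle, such as it is — is precisely this interpretational step: one must restrict to homotopies over $f$, because $\pi_1(\Hom_{\A\0}(a,x^{rep}))=\hom_\A(\cok(a),\ker(x^{rep}))$ is typically nonzero, so the raw homotopy between two lifts is far from unique. What I would actually prove is that the homotopy fibre of $\Hom_{\A\0}(a,x^{rep})\to\Hom_{\A\0}(a,x)$ over $f$ is a one-point category. Beyond that observation the argument is a routine diagram chase using only projectivity of $A_0$ and the universal property of the pull-back square already displayed just before the lemma.
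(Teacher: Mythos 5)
Your argument is correct, and since the paper states this as an ``easy Lemma'' without giving any proof, there is nothing to diverge from: what you write is exactly the intended routine argument, lifting $f_0$ through $\epsilon_0$ by projectivity of $A_0$ and then using the universal property of the pull-back defining $P_1$ to get $\tilde f_1$, the homotopy $\alpha$, and its uniqueness. Two small remarks: your proof never uses the hypothesis that $f$ induces isomorphisms on $\ker$ and $\cok$ (that hypothesis only matters for the use of the lemma, namely that the lift is then the comparison map between replacements), which is fine since you prove a stronger statement; and your reading of ``unique up to unique homotopy'' as uniqueness of the homotopy \emph{over} $f$ (i.e.\ with $\epsilon_1\alpha=0$) is the right one, since, as you observe, $\pi_1(\Hom_{\A\0}(a,x^{rep}))=\hom_\A(\cok(a),\ker(x^{rep}))$ is generally nonzero, so unrestricted homotopies between the lifts are not unique.
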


 Now we discuss 2-kernels and 2-cokernels in $\A\0_c$. Let
$(f_0,f_1):a\to b$ be a morphism in $\A\0_c$. According to
\cite{dupont} the 2-cokernel of $(f_0,f_1)$ in $\A\0$ is $(q':Q\to
B_0,(id,q),\xi)$;
$$\xymatrix{A_1\ar[r]^{f_1}\ar[d]_a&B_1\ar[r]^q\ar[d]&Q\ar[d]^{q'}\\
A_0\ar[r]_{f_0}\ar[rru]&B_0\ar[r]_{id} &B_0}$$ where
$$\xymatrix{A_1\ar[r]^{f_1}\ar[d]_a&B_1\ar[d]^q\\
A_0\ar[r]_{\xi}&Q}$$ is given by the classical push-out
construction. It follows that if $b\in \A\0_c$, then the 2-cokernel
is also in $\A\0_c$.  Hence the 2-category $\A\0_c$ has 2-cokernels
and the inclusion $\A\0_c\to \A\0$ preserves 2-cokernels.

Thanks to \cite{dupont} the 2-kernel of $(f_0,f_1):a\to b$ in $\A\0$
is $(k':A_1\to K, (k,id),\kappa)$:
$$\xymatrix{A_1\ar[r]^{id}\ar[d]^{k'}
&A_1\ar[r]^{f_1}\ar[d]&B_1\ar[d]_b\\
K\ar[r]_k\ar[rru]&A_0\ar[r]_{f_0}&B_0}$$ where
$$\xymatrix{K\ar[r]^{\kappa}\ar[d]_k&B_1\ar[d]^b\\A_0\ar[r]_{f_0}& B_0}$$
is given by the classical pull-back
construction. In general $K$ is not a projective object even if
$A_0$ and $B_0$ are projective objects in $\A$. Hence $k':A_1\to K$
does not belongs to $\A\0_c$. Let $c:C_1\to C_0$ be a replacement of
$k'$. Thus we have an epimorphism $\epsilon:C_0\to K$ with
projective $C_0$ and the pull-back diagram
$$\xymatrix{C_1\ar[r]_{\epsilon'}\ar[d]_c&A_1\ar[d]_{k'}\\
C_0\ar[r]_\epsilon &K}$$ We claim that $(c:C_1\to C_0,(k\epsilon,
\epsilon'),\kappa\epsilon)$ is a 2-kernel of $(f_0,f_1)$. Indeed, we
have to show that for any object $x$ in $\A\0_c$ the groupoids
$\Hom_{\A\0}(x,c)$ and  the 2-kernel of
$$\Hom_{\A\0}(x,a)\to \Hom_{\A\0}(x,b)$$
are equivalent. But we know that the last groupoid is equivalent to
$\Hom_{\A\0}(x,k')$. Since the morphism $(\epsilon, \epsilon'):c\to
k'$ satisfies the conditions of Corollary \ref{cof} the claim
follows.

Now is easy to see that the 2-cokernel of
$(k\epsilon,\epsilon'):c\to a$ is $k:K\to A_0$ and the 2-kernel of
$(id,q):b\to q'$ is $e:E_1\to E_0$, where $E_0\to Q$ is an
epimorphism with $E_0$ projective and $E_1$ is the pull-back
$$\xymatrix{E_1\ar[r]\ar[d]^e& B_1\ar[d]^q\\ E_0\ar[r]&Q}$$

It follows from the description of 2-kernels and 2-cokernels  that
$$\Sigma(a:A_1\to A_0)=(\cok(a)\to 0)$$
and
$$\Omega(a:A_1\to A_0)=(F_1\to F_0),$$
where
$$0\to F_1\to F_0\to A_1\xto{a} A_0\to 0$$
is an exact sequence with projective $F_0$.

Thus $Pip(f)=(r:R_1\to R_0)$, where
$$0\to R_1\to R_0\xto{\pi} A_1\to K$$
is an exact sequence with projective $R_0$ and $\pi$ can be
considered as a homotopy
$$\xymatrix{R_1\ar[r]^0\ar[d]^r& A_1\ar[d]^a\\
R_0\ar[r]_0 &A_0}$$ Since $\Sigma(r)=(\ker(k')\to 0)$ we see that
the morphism $\omega_f:Coroot(\pi)\to k'$ defined in \cite{dupont}
is an equivalence. A similar argument works also for $Copip(f)$.
This finishes the proof of Theorem \ref{chemia}.

{\bf Remarks}. 1) If $\A$ has enough injective objects, then we can
consider the dual construction. Namely, we  let $\A\0_f$ be the full
2-subcategory of the 2-category $\A\0$ consisting of objects
$a:A_1\to A_0$ such that $A_1$ is an injective object of $\A$. Then
$\A\0_f$ is a 2-abelian $\sf Gpd$-category and the category of
discrete and codiscrete objects of $\A\0_f$ are equivalent to $\A$.

 2) The same method can be used to construct another abelian
 2-categories.  Recall that a \emph{symmetric
 categorical group}   consists of the
following data
$$C=(\d:C_{ee}\to C_e,\ \ \{-,-\}:C_e\x C_e\to  C_{ee})$$  where $C_e$ and
$C_{ee}$ are groups and  $\d$ is a homomorphism, while $\{-,-\}$ is
a map such that the following equalities hold for $x,y,z\in C_e$ and
$a,b\in C_{ee}$.
$$\d \{x,y\}=x^{-1}y^{-1}xy$$
$$\{\d a, \d b\}=a^{-1}b^{-1}ab$$
$$\{\d a,x\}\{x, \d a\}=1$$
$$\{x,yz\}=\{x,z\}\{x,y\}\{y^{-1}x^{-1}yx,z\}$$
$$\{xy,z\}=\{y^{-1}xy^{-1},y^{-1}zy\}\{y,z\}.$$
$$ \{x,y\}\{y,x\}=1.$$
Symmetric categorical groups obviously form a 2-category $\sf SCG$.
We let ${\sf SCG}_c$ be the full 2-subcategory formed by objects
$C=(\d:C_{ee}\to C_e)$ with free $C_e$. In a similar manner one can
prove that ${\sf SCG}_c$ is a $2$-abelian $\sf Gpd$-category.

3) All these examples are particular cases of the following general
construction. Let $\bf M$ be a closed pointed simplicial model
category. $M$ is called \emph{additive} if the natural maps from the
coproduct to the products
$$\begin{pmatrix}1&0\\0&1\end{pmatrix},
\begin{pmatrix}1&1\\0&1\end{pmatrix}:X\vee Y\to X\times Y$$
are weak equivalences. Moreover, $\bf M$ is called two-stage if
$\Sigma^2(X)\to 0$ is a weak equivalence.  If $\bf M$ is a two-stage
additive pointed model simplicial category then under some
hypothesis ${\bf M}_{cf}$ is a $2$-abelian $\sf Gpd$-category, where
objects of ${\bf M}_{cf}$ are fibrant and cofibrant objects,
morphisms in ${\bf M}_{cf}$  are usual morphisms, while 2-arrows are
homotopy classes of homotopies. The details will be given elsewhere.

\end{document}